\documentclass[12pt,a4paper,oneside,reqno,notitlepage]{amsart}
\usepackage{amssymb}
\topmargin 4pt
\oddsidemargin 0pt
\evensidemargin 0pt
\textwidth
150mm

\textheight  220mm


\newtheorem{theorem}{Theorem}
\theoremstyle{plain}

\newtheorem{lemma}{Lemma}

\numberwithin{equation}{section}

\begin{document}

\baselineskip 8mm
\parindent 9mm

\title[]
{Fractional elliptic equations with Hardy potential and critical nonlinearities}

\author{Kexue  Li}

\address{Kexue Li\newline
School of Mathematics and Statistics, Xi'an Jiaotong University, Xi'an 710049, China}
\email{kxli@mail.xjtu.edu.cn}

\thanks{{\it 2010 Mathematics Subjects Classification}: 35B33, 58E30}
\keywords{Fractional elliptic equation; Critical Hardy-Sobolev exponent; Hardy potential}

\begin{abstract}
In this paper, we consider the fractional elliptic equation
\begin{align*}
\left\{\begin{aligned}
&(-\Delta)^{s}u-\mu\frac{u}{|x|^{2s}}=\frac{|u|^{2_{s}^{\ast}(\alpha)-2}u}{|x|^{\alpha}}+f(x,u), && \mbox{in} \ \Omega,\\
&u=0, && \mbox{in} \ \mathbb{R}^{n}\backslash \ \Omega,
\end{aligned}\right.
\end{align*}
where $\Omega\subset R^{n}$ is a smooth bounded domain, $0\in\Omega$, $0<s<1$, $0<\alpha<2s<n$, $2_{s}^{\ast}(\alpha)=\frac{2(n-\alpha)}{n-2s}$. Under some assumptions on $\mu$ and $f$, we obtain the existence of nonnegative solutions.
\end{abstract}
\maketitle

\section{\textbf{Introduction}}
In this paper, we are concerned with the following critical problem
\begin{align}\label{critical}
\left\{\begin{aligned}
&(-\Delta)^{s}u-\mu\frac{u}{|x|^{2s}}=\frac{|u|^{2_{s}^{\ast}(\alpha)-2}u}{|x|^{\alpha}}+f(x,u), && \mbox{in} \ \Omega,\\
&u=0, && \mbox{in}\ \mathbb{R}^{n}\backslash \ \Omega,
\end{aligned}\right.
\end{align}
where $\Omega\subset \mathbb{R}^{n}$ is a smooth bounded domain, $0\in\Omega$,  $0<s<1$, $0<\alpha<2s<n$, $0\leq \mu<\mu_{H}(s)=4^{s}\frac{\Gamma^{2}(\frac{n+2s}{4})}{\Gamma^{2}(\frac{n-2s}{4})}$, $2_{s}^{\ast}(\alpha)=\frac{2(n-\alpha)}{n-2s}$ is the fractional critical Hardy-Sobolev
exponent. The fractional Laplacian $(-\Delta)^{s}$ is defined by
\begin{align*}
(-\Delta)^{s}u(x)=c_{n,s} P.V.\int_{\mathbb{R}^{n}}\frac{u(x)-u(y)}{|x-y|^{n+2s}}dy,
\end{align*}
where $P.V.$ is the principal value and $c_{n,s}$ is a constant that depends on $n$ and $s$. If $s=1$, $(-\Delta)^{s}$ is defined as $-\Delta$, where $\Delta$ is the Laplacian.
In fact, for any $u\in C_{0}^{\infty}(\mathbb{R}^{n})$, $\lim_{s\rightarrow 1-}(-\Delta)^{s}u=-\Delta u$,
see Proposition 4.4 in \cite{DPV}. For $s=1$, there are many results for problem (\ref{critical}), we refer to \cite{Bhakta,Kang,Yan,Zhang}.

Recently, the existence of nontrivial solutions for nonlinear fractional elliptic equations with Hardy potential
\begin{align}\label{several}
(-\Delta)^{s}u-\mu\frac{u}{|x|^{2s}}=g(u) \ \ \ \ \mbox{in} \ \Omega,
\end{align}
have been studied by several authors. Barrios, Medina and Peral \cite{BMP} studied (\ref{several}) with $g(u)=u^{p}+\lambda u^{q}$ $(0<q<1,1<p<p(\mu,s))$ and discussed the existence and multiplicity of solutions depending on the value of $p$. Fall \cite{Fall} studied (\ref{several}) with $g(u)=u^{p}$ $(p>1)$ and obtained the existence and nonexistence of nonnegative distributional solutions. Shakerian \cite{Shakerian} studied the following problem
\begin{align}\label{Shakerian}
\left\{\begin{aligned}
&(-\Delta)^{s}u-\mu\frac{u}{|x|^{2s}}-\lambda u=\frac{|u|^{2_{s}^{\ast}(\alpha)-2}u}{|x|^{\alpha}}+h(x)u^{q-1}, && \mbox{in} \ \Omega,\\
&u=0, && \mbox{in} \ \mathbb{R}^{n}\backslash \ \Omega,
\end{aligned}\right.
\end{align}
where $\lambda\in \mathbb{R}$, $h\in C^{0}(\bar{\Omega})$, $h\geq 0$, $q\in (2,2_{s}^{*}(0))$. A sufficient condition is established for the existence of a positive solution for (\ref{Shakerian}). Zhang and Hsu \cite{ZH} investigated a system of fractional elliptic equation involving critical Sobolev-Hardy exponents and concave-convex nonlinearities, the existence and multiplicity of positive solutions was proved by variational methods. Motivated by the above papers, we will study the problem (\ref{critical}), here is the main result of this paper.

\begin{theorem}
Suppose that $0<s<1$, $0<\mu<\mu_{H}(s)$, $0<\alpha<2s<n$, $\frac{n-2s}{2}<\beta_{+}(\mu)<\frac{n}{2}$,
where $\beta_{+}(\mu)$ is the unique solution of $\Psi_{n,s}(\beta)=4^{s}\frac{\Gamma(\frac{n-\beta}{2})\Gamma(\frac{2s+\beta}{2})}{\Gamma(\frac{n-2s-\beta}{2})\Gamma(\frac{\beta}{2})}-\mu=0$ in $(\frac{n-2s}{2},n-2s]$, \\
$(f_{1})$ $f\in C(\Omega\times \mathbb{R},\mathbb{R})$ and there exist constants $c_{1},c_{2}>0$ and $p\in (2,2_{s}^{*}(0))$ such that $|f(x,t)|\leq c_{1}|t|+c_{2}|t|^{p-1}$ for all $x\in \Omega$, $t\in \mathbb{R}$,\\
$(f_{2})$ There exists a constant $K>0$ big enough such that $F(x,t)=\int_{0}^{t}f(x,s)ds\geq Kt^{2}$ for all $x\in \Omega$, $t\in \mathbb{R}$,\\
$(f_{3})$ There exist constants $\rho>2$, $\eta>0$ such that $\rho F(x,t)\leq f(x,t)t+\eta t^{2}$ for all $x\in \Omega$, $t\in \mathbb{R}$,\\
then problem (\ref{critical}) has at least a nonnegative solution.
\end{theorem}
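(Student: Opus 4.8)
The plan is to apply the Mountain Pass Theorem to the energy functional of (\ref{critical}) and to overcome the loss of compactness created by the critical Hardy--Sobolev term through a Brezis--Nirenberg type threshold analysis. I would work in the fractional space $X$ of functions vanishing in $\mathbb{R}^n\setminus\Omega$, with Gagliardo seminorm $[u]_s^2=\frac{c_{n,s}}{2}\iint_{\mathbb{R}^{2n}}\frac{|u(x)-u(y)|^2}{|x-y|^{n+2s}}\,dx\,dy$. Since $0\le\mu<\mu_H(s)$, the fractional Hardy inequality makes
\[
\|u\|_\mu^2:=[u]_s^2-\mu\int_\Omega\frac{u^2}{|x|^{2s}}\,dx
\]
an equivalent norm on $X$, so that the functional
\[
I(u)=\tfrac12\|u\|_\mu^2-\tfrac{1}{2_s^{\ast}(\alpha)}\int_\Omega\frac{|u|^{2_s^{\ast}(\alpha)}}{|x|^\alpha}\,dx-\int_\Omega F(x,u)\,dx
\]
is well defined and $C^1$ by $(f_1)$ together with the fractional Sobolev and Hardy--Sobolev embeddings; its critical points are the weak solutions of (\ref{critical}). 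To force nonnegativity I would first replace $f(x,t)$ and the critical term by their truncations at $t=0$ (extended by zero for $t<0$); testing the truncated equation against the negative part $u^-$ and using the pointwise inequality $(u(x)-u(y))(u^-(x)-u^-(y))\le-(u^-(x)-u^-(y))^2$ for the fractional Laplacian, together with $\mu<\mu_H(s)$, forces $u^-\equiv0$, so every critical point of the truncated functional is a nonnegative solution of the original problem.

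Next I would verify the mountain pass geometry. From $(f_1)$ one gets $|F(x,t)|\le\frac{c_1}{2}t^2+\frac{c_2}{p}|t|^p$, and then the Hardy and Hardy--Sobolev embeddings give $I(u)\ge a\|u\|_\mu^2-b\|u\|_\mu^p-d\|u\|_\mu^{2_s^{\ast}(\alpha)}$ near the origin; since $p,2_s^{\ast}(\alpha)>2$ there are $\rho,\delta>0$ with $I|_{\|u\|_\mu=\rho}\ge\delta>0$. For a far point, $(f_3)$ integrates to a lower bound $F(x,t)\ge c|t|^\rho-C t^2$ for large $|t|$, and because $\rho>2$ this yields $I(t_0u)<0$ for some large $t_0$ and a fixed $u\ne0$, giving the second geometric condition. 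Let $c=\inf_\gamma\max_\tau I(\gamma(\tau))$ be the associated minimax level. Condition $(f_3)$ is moreover a perturbed Ambrosetti--Rabinowitz condition, and the usual test with $\rho I(u_k)-\langle I'(u_k),u_k\rangle$ shows that every Palais--Smale sequence is bounded in $X$.

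I would then control compactness below a threshold. Writing $u_k\rightharpoonup u$ and $v_k=u_k-u$, a Brezis--Lieb splitting of $\|\cdot\|_\mu^2$ and of the critical Hardy--Sobolev integral, combined with the concentration behaviour of the latter, shows that either $v_k\to0$ in $X$ or the energy carried by the bubbles is at least
\[
c^{\ast}=\frac{2s-\alpha}{2(n-\alpha)}\bigl(S_{\mu,s,\alpha}(\Omega)\bigr)^{\frac{n-\alpha}{2s-\alpha}},
\]
where $S_{\mu,s,\alpha}(\Omega)$ is the best constant of the fractional Hardy--Sobolev inequality with Hardy potential. Consequently $I$ satisfies the Palais--Smale condition at every level below $c^{\ast}$, and the only remaining task is to place the minimax level strictly beneath this threshold.

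The main obstacle is exactly this strict estimate $c<c^{\ast}$, which demands a careful test-function computation. I would insert into the minimax a cut-off, rescaled extremal $U_\epsilon$ of $S_{\mu,s,\alpha}(\mathbb{R}^n)$ --- the ground state of the limiting problem, whose decay $|x|^{-\beta_+(\mu)}$ at infinity is dictated by the larger root of $\Psi_{n,s}(\beta)=\mu$ --- and estimate $\max_{t\ge0}I(tU_\epsilon)$ as $\epsilon\to0$. Using $(f_2)$ in the form $\int_\Omega F(x,tU_\epsilon)\,dx\ge Kt^2\int_\Omega U_\epsilon^2\,dx$, one obtains
\[
\max_{t\ge0}I(tU_\epsilon)\le\frac{2s-\alpha}{2(n-\alpha)}\,\frac{\Bigl(\|U_\epsilon\|_\mu^2-2K\int_\Omega U_\epsilon^2\,dx\Bigr)^{\frac{n-\alpha}{2s-\alpha}}}{\Bigl(\int_\Omega\frac{|U_\epsilon|^{2_s^{\ast}(\alpha)}}{|x|^\alpha}\,dx\Bigr)^{\frac{n-2s}{2s-\alpha}}},
\]
whose leading term reproduces $c^{\ast}$. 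Everything then hinges on whether the negative contribution $K\int_\Omega U_\epsilon^2\,dx$ dominates the remainder produced by the cut-off in $\|U_\epsilon\|_\mu^2$. The hypothesis $\frac{n-2s}{2}<\beta_+(\mu)<\frac{n}{2}$ is precisely what guarantees the favourable rate $\int_\Omega U_\epsilon^2\,dx\sim\epsilon^{\,2\beta_+(\mu)-(n-2s)}$ (slower than $\epsilon^{2s}$ and than the cut-off error), so that choosing $K$ large in $(f_2)$ pushes the maximum strictly below $c^{\ast}$. I expect the delicate bookkeeping of these $\epsilon$-powers --- proving that the $L^2$ gain beats the critical remainder throughout the admissible range of $\beta_+(\mu)$ --- to be the technical heart of the argument; once $c<c^{\ast}$ is secured, the Palais--Smale condition at level $c$ yields a nontrivial critical point, which by the truncation is the desired nonnegative solution.
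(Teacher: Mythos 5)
Your proposal follows essentially the paper's route: truncation of the nonlinearities at $t=0$ to force nonnegativity, the equivalent norm furnished by the fractional Hardy inequality, mountain pass geometry from $(f_{1})$, boundedness of Palais--Smale sequences from the perturbed Ambrosetti--Rabinowitz condition $(f_{3})$ together with $(f_{2})$, and the cut-off extremals $\varphi U_{\varepsilon}$ with decay exponent $\beta_{+}(\mu)$ to push the minimax level below $\frac{2s-\alpha}{2(n-\alpha)}(\Lambda_{\mu,s,\alpha})^{\frac{n-\alpha}{2s-\alpha}}$. Two points of comparison are worth recording. First, for compactness you propose a Brezis--Lieb splitting and a full $(PS)_{c}$ condition below the threshold; the paper (Lemma \ref{geometry}) is more economical and never proves strong convergence: it shows only that the weak limit $u$ cannot vanish, since $u=0$ together with the subcriticality of $f$ and the compact embedding would give $\||u_{n}\||^{2}-\int_{\Omega}(u_{n})_{+}^{2_{s}^{\ast}(\alpha)}|x|^{-\alpha}dx=o(1)$ and hence $I(u_{n})\geq \frac{2s-\alpha}{2(n-\alpha)}(\Lambda_{\mu,s,\alpha})^{\frac{n-\alpha}{2s-\alpha}}+o(1)$, contradicting $c$ below the threshold; weak continuity of $I'$ then makes $u$ a nontrivial critical point. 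Your route also works, but it obliges you to verify a Brezis--Lieb lemma for the singular-weight term $\int_{\Omega}|u|^{2_{s}^{\ast}(\alpha)}|x|^{-\alpha}dx$, machinery the paper deliberately avoids. Second, there is a genuine bookkeeping slip in your threshold estimate: since $\beta_{+}(\mu)<\frac{n}{2}$ gives $2<n/\beta_{+}(\mu)$, the estimate (\ref{estimate3}) with $q=2$ yields $\int_{\Omega}u_{\varepsilon}^{2}dx=O(\varepsilon^{2\beta_{+}(\mu)+2s-n})$, which is \emph{exactly the same order} as the cut-off remainder $O(\varepsilon^{2\beta_{+}(\mu)+2s-n})$ in (\ref{estimate1}), not slower as you assert. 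The $L^{2}$ gain therefore ties, rather than beats, the cut-off error, and this is precisely why $(f_{2})$ demands $K$ ``big enough'': the combined remainder has the form $(C_{1}-C_{2}K)\varepsilon^{2\beta_{+}(\mu)+2s-n}$, which is negative only for $K$ large relative to the implicit constants, exactly as in the paper's Lemma \ref{energy}. Since you do invoke $K$ large, your conclusion survives, but the claimed strict domination of rates should be corrected to an equality of rates resolved by the size of $K$.
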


\section{Preliminaries}
Let $\Omega$ be a smooth bounded domain in $\mathbb{R}^{n}$ with 0 in its interior.
For $p\in [1,\infty)$, we denote by $L^{p}(\Omega)$ the usual Lebesgue space with the norm $\|u\|_{p}=\left(\int_{\Omega}|u|^{p}dx\right)^{\frac{1}{p}}$. For $s\in (0,1)$, the fractional Sobolev space $H^{s}_{0}(\Omega)$ is defined as the closure of $C_{0}^{\infty}(\Omega)$ with respect to the norm
\begin{align*}
\|u\|^{2}_{H^{s}_{0}(\Omega)}=\int_{\mathbb{R}^{n}}|(-\Delta)^{\frac{s}{2}}u|^{2}dx=\frac{c_{n,s}}{2}\int_{\mathbb{R}^{n}}\int_{\mathbb{R}^{n}}\frac{|u(x)-u(y)|^{2}}{|x-y|^{n+2s}}dxdy,
\end{align*}
where $c_{n,s}=\frac{4^{s}\Gamma(\frac{n+2s}{2})}{\pi^{\frac{n}{2}}|\Gamma(-s)|}$.

For $0<\mu<\mu_{H}(s)$, define another norm
\begin{align}\label{another}
\||u\||=\left(\frac{c_{n,s}}{2}\int_{\mathbb{R}^{n}}\int_{\mathbb{R}^{n}}\frac{|u(x)-u(y)|^{2}}{|x-y|^{n+2s}}dxdy-\mu\int_{\Omega}\frac{|u|^{2}}{|x|^{2s}}dx\right)^{\frac{1}{2}}.
\end{align}
By the fractional Hardy inequality (see \cite{Frank})
\begin{align*}
\mu_{H}(s)\int_{\mathbb{R}^{n}}\frac{|u|^{2}}{|x|^{2s}}dx\leq \int_{\mathbb{R}^{n}}|(-\Delta)^{\frac{s}{2}}u|^{2}dx, \ u\in H_{0}^{s}(\mathbb{R}^{n}),
\end{align*}
we see that $\||\cdot\||$ is well defined and for $0<\mu<\mu_{H}(s)$, $\||\cdot\||$ is equalivalent to the norm $\|\cdot\|_{H^{s}_{0}(\Omega)}$.

In order to study positive solution for (\ref{critical}), we consider the existence of nontrivial solutions to the problem
\begin{align}\label{nonnegative}
(-\Delta)^{s}u-\mu\frac{u}{|x|^{2s}}=\frac{u_{+}^{2_{s}^{\ast}(\alpha)-1}}{|x|^{\alpha}}+f_{+}(x,u), \ \mbox{in} \ \Omega.
\end{align}
The energy functional $I: H_{0}^{s}(\Omega)\rightarrow \mathbb{R}$ whose critical points are weak solutions to problem (\ref{nonnegative}) is given by
\begin{align*}
I(u)=\frac{1}{2}\||u|\|^{2}-\frac{1}{2_{s}^{*}(\alpha)}\int_{\Omega}\frac{u_{+}^{2_{s}^{*}(\alpha)}}{|x|^{\alpha}}dx-\int_{\Omega}F_{+}(x,u)dx, \ u\in H_{0}^{s}(\Omega),
\end{align*}
where $u_{+}=\max\{u,0\}$, $F_{+}(x,t)=\int_{0}^{t}f_{+}(x,s)ds$,
$f_{+}(x,t)=\left\{\begin{aligned}
&f(x,t), && t\geq 0,\\
&0, && t<0.
\end{aligned}\right.$\\
The best constant in fractional Hardy-Sobolev inequality in $\mathbb{R}^{n}$ is
\begin{align}\label{Hardy-Sobolev}
\Lambda_{\mu,s,\alpha}=\inf_{u\in H^{s}(\mathbb{R}^{n})\backslash \{0\}}\frac{\int_{\mathbb{R}^{n}}(|(-\Delta)^{\frac{s}{2}}u|^{2}-\mu\frac{|u|^{2}}{|x|^{2s}})dx}{(\int_{\mathbb{R}^{n}}\frac{|u|^{2_{s}^{*}(\alpha)}}{|x|^{\alpha}}dx)^{\frac{2}{2_{s}^{*}(\alpha)}}}.
\end{align}
Ghoussoub and Shakerian \cite{GS} proved the existence of extremals for $\Lambda_{\mu,s,\alpha}(\mathbb{R}^{n})$, when $0<\alpha<2s<n$ and $-\infty<\mu<\mu_{H}(s)$. Note that any minimizer for (\ref{Hardy-Sobolev}) is a variational solution of the following borderline problem
\begin{align}\label{borderline}
\left\{\begin{aligned}
&(-\Delta)^{s}u-\mu\frac{u}{|x|^{2s}}=\frac{|u|^{2_{s}^{\ast}(\alpha)-2}u}{|x|^{\alpha}}, && \mbox{in} \ \mathbb{R}^{n},\\
&u\geq 0, u \not\equiv 0&& \mbox{in} \ \mathbb{R}^{n}.
\end{aligned}\right.
\end{align}
by \cite{GS,Shakerian}, for $0<\mu<\mu_{H}(s)$, the problem (\ref{borderline}) has positive radial symmetric solution $U_{\varepsilon}(x)=\varepsilon^{-(n-2s)/2}u_{\mu}(x/\varepsilon)$, where $u_{\mu}$ is a solution of (\ref{borderline}), $u_{\mu}\in C^{1}(\mathbb{R}^{n}\backslash \{0\})$, $u_{\mu}>0$. Let $\varphi\in C_{0}^{\infty}(\Omega)$, $0\leq \varphi(x)\leq 1$, choose $\rho>0$ small enough such that $\varphi(x)=1$ for $|x|<\rho/2$, $\varphi(x)=0$ for $|x|\geq \rho$. Set $u_{\varepsilon}(x)=\varphi(x)U_{\varepsilon}(x)$.

\begin{lemma}\emph{(\cite{ZH}).}
Assume that $0<s<1$, $n>2s$, $0\leq \mu<\mu_{H}(s)$, $0\leq \alpha<2s$, $1\leq q<2_{s}^{\ast}(\alpha)$. Then, as $\varepsilon\rightarrow 0$, we have the following estimates:
\begin{align}\label{estimate1}
\||u_{\varepsilon}\||^{2}=(\Lambda_{\mu,s,\alpha})^{\frac{n-\alpha}{2s-\alpha}}+O(\varepsilon^{2\beta_{+}(\mu)+2s-n}),
\end{align}
\begin{align}\label{estimate2}
\left(\int_{\Omega}\frac{|u_{\varepsilon}|^{2_{s}^{*}(\alpha)}}{|x|^{\alpha}}dx\right)^{\frac{2}{2_{s}^{*}(\alpha)}}=(\Lambda_{\mu,s,\alpha})^{\frac{n-\alpha}{2s-\alpha}}+O(\varepsilon^{2_{s}^{*}(\alpha)\beta_{+}(\mu)+\alpha-n}),
\end{align}
\begin{align}\label{estimate3}
\int_{\Omega}|u_{\varepsilon}|^{q}dx=\left\{\begin{aligned}
&O(\varepsilon^{n-\frac{q(n-2s)}{2}}), && q>\frac{n}{\beta_{+}(\mu)},\\
&O(\varepsilon^{n-\frac{q(n-2s)}{2}}|\log \varepsilon|), && q=\frac{n}{\beta_{+}(\mu)},\\
&O(\varepsilon^{q(\beta_{+}(\mu)-\frac{n-2s}{2})}), && q<\frac{n}{\beta_{+}(\mu)},
\end{aligned}\right.
\end{align}
where $\beta_{+}(\mu)$ is the unique solution of $\Psi_{n,s}(\beta)=4^{s}\frac{\Gamma(\frac{n-\beta}{2})\Gamma(\frac{2s+\beta}{2})}{\Gamma(\frac{n-2s-\beta}{2})\Gamma(\frac{\beta}{2})}-\mu=0$ in $(\frac{n-2s}{2},n-2s]$.
\end{lemma}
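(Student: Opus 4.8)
The plan is to reduce all three estimates to the sharp two–sided pointwise asymptotics of the ground state $u_{\mu}$, combined with the exact scaling of the $\mathbb{R}^{n}$ functionals. The decisive analytic input is the behaviour of $u_{\mu}$ at the two ends. Since $\bigl((-\Delta)^{s}-\mu|x|^{-2s}\bigr)|x|^{-\beta}=\Psi_{n,s}(\beta)\,|x|^{-\beta-2s}$, a homogeneous profile $|x|^{-\beta}$ solves the equation linearised around the origin and around infinity (where the critical term $u^{2_{s}^{*}(\alpha)-1}/|x|^{\alpha}$ is of lower order) exactly when $\Psi_{n,s}(\beta)=0$, and this characteristic equation has precisely two roots $\beta_{-}(\mu)\in(0,\frac{n-2s}{2})$ and $\beta_{+}(\mu)\in(\frac{n-2s}{2},n-2s)$. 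Using the regularity of $u_{\mu}$ on $\mathbb{R}^{n}\setminus\{0\}$ together with a barrier/comparison argument for the nonlocal operator, one shows $u_{\mu}(x)\simeq|x|^{-\beta_{-}(\mu)}$ as $x\to0$ and $u_{\mu}(x)\simeq|x|^{-\beta_{+}(\mu)}$ as $|x|\to\infty$, the smaller exponent being forced at $0$ by finiteness of the energy and the larger one at infinity by decay. After the dilation $U_{\varepsilon}(x)=\varepsilon^{-(n-2s)/2}u_{\mu}(x/\varepsilon)$ this yields, once $\varepsilon$ is small, the uniform tail bound $U_{\varepsilon}(x)\le C\varepsilon^{\beta_{+}(\mu)-\frac{n-2s}{2}}|x|^{-\beta_{+}(\mu)}$ on $\{|x|\ge\rho/2\}$, whose prefactor vanishes as $\varepsilon\to0$ because $\beta_{+}(\mu)>\frac{n-2s}{2}$. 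Every error term below is governed by this single bound.

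For (\ref{estimate1}) and (\ref{estimate2}) I would exploit the scale invariance of the $\mathbb{R}^{n}$ functionals on $U_{\varepsilon}$: the full–space Dirichlet form $\int_{\mathbb{R}^{n}}(|(-\Delta)^{s/2}U_{\varepsilon}|^{2}-\mu U_{\varepsilon}^{2}/|x|^{2s})$ and the full–space integral $\int_{\mathbb{R}^{n}}U_{\varepsilon}^{2_{s}^{*}(\alpha)}/|x|^{\alpha}$ are independent of $\varepsilon$, and, because $u_{\mu}$ realises equality in the Hardy–Sobolev inequality (\ref{Hardy-Sobolev}), they equal the constants displayed in (\ref{estimate1}) and (\ref{estimate2}). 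Writing $u_{\varepsilon}=\varphi U_{\varepsilon}$, the truncation error for the Hardy term is $\mu\int_{\mathbb{R}^{n}}(1-\varphi^{2})U_{\varepsilon}^{2}\,|x|^{-2s}\,dx$, and for the weighted integral it is $\int_{\mathbb{R}^{n}}(1-\varphi^{2_{s}^{*}(\alpha)})U_{\varepsilon}^{2_{s}^{*}(\alpha)}\,|x|^{-\alpha}\,dx$; both are supported in the fixed region $\{|x|\ge\rho/2\}$. Inserting the tail bound and integrating over this $\varepsilon$–independent region—where the resulting power of $|x|$ is integrable at infinity precisely because $\beta_{+}(\mu)>\frac{n-2s}{2}$, using $2_{s}^{*}(\alpha)\frac{n-2s}{2}=n-\alpha$—gives the orders $\varepsilon^{2\beta_{+}(\mu)+2s-n}$ and $\varepsilon^{2_{s}^{*}(\alpha)\beta_{+}(\mu)+\alpha-n}$ respectively; for (\ref{estimate2}) one then raises to the power $2/2_{s}^{*}(\alpha)$, which preserves the error order. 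The Hardy piece and the weighted–integral piece are thus immediate, while the Gagliardo seminorm in (\ref{estimate1}) needs the extra care described below.

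For (\ref{estimate3}) I would use that $0\le\varphi\le1$ and that $u_{\varepsilon}$ is supported in $\{|x|<\rho\}$, so that $\int_{\Omega}|u_{\varepsilon}|^{q}\,dx\le\int_{|x|<\rho}U_{\varepsilon}^{q}\,dx$, and after the substitution $y=x/\varepsilon$ the right–hand side equals $\varepsilon^{\,n-\frac{q(n-2s)}{2}}\int_{|y|<\rho/\varepsilon}|u_{\mu}(y)|^{q}\,dy$. Integrability of $|u_{\mu}|^{q}$ at the origin is automatic, since near $0$ one has $|u_{\mu}(y)|^{q}\simeq|y|^{-q\beta_{-}(\mu)}$ with $q\beta_{-}(\mu)<n$ (because $q<2_{s}^{*}(0)=\frac{2n}{n-2s}$ and $\beta_{-}(\mu)<\frac{n-2s}{2}$), so the three regimes are dictated entirely by the tail $|u_{\mu}(y)|^{q}\simeq|y|^{-q\beta_{+}(\mu)}$. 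When $q>n/\beta_{+}(\mu)$ the tail integral converges and the remaining factor is a constant, giving $O(\varepsilon^{\,n-q(n-2s)/2})$; when $q=n/\beta_{+}(\mu)$ the integrand is $\simeq|y|^{-n}$ and $\int_{|y|<\rho/\varepsilon}|y|^{-n}\,dy\simeq|\log\varepsilon|$; when $q<n/\beta_{+}(\mu)$ the tail integral diverges like $(\rho/\varepsilon)^{\,n-q\beta_{+}(\mu)}\simeq\varepsilon^{\,q\beta_{+}(\mu)-n}$, and multiplying by $\varepsilon^{\,n-q(n-2s)/2}$ produces $O(\varepsilon^{\,q(\beta_{+}(\mu)-\frac{n-2s}{2})})$. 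This reproduces exactly the three cases of (\ref{estimate3}).

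The main obstacle is twofold. First, everything rests on the sharp two–sided asymptotics $u_{\mu}(x)\simeq|x|^{-\beta_{\pm}(\mu)}$, whose justification requires the optimal interior regularity of $u_{\mu}$ away from the origin together with a barrier/comparison argument for the nonlocal operator $(-\Delta)^{s}-\mu|x|^{-2s}$ to select the correct exponent at each end; the inequality $\beta_{+}(\mu)>\frac{n-2s}{2}$ is precisely what renders the tail integrals in all three estimates convergent. Second, unlike the local case, the Dirichlet energy in (\ref{estimate1}) is the nonlocal Gagliardo double integral, so the truncation error couples the annulus $\{|x|\sim\rho\}$ to all of $\mathbb{R}^{n}$ through the kernel $|x-y|^{-n-2s}$; controlling the cross term to the exact order $\varepsilon^{2\beta_{+}(\mu)+2s-n}$, rather than a worse power, is the delicate computational step, and it must be carried out by splitting the double integral according to whether both, one, or neither of the points $x,y$ lies in the support of $1-\varphi$.
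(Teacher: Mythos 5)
Your plan is, in substance, the actual proof: the paper offers no argument for this lemma at all---it is quoted verbatim from \cite{ZH}---and the proof there (resting on the asymptotic analysis of Ghoussoub--Shakerian \cite{GS}) runs exactly along your lines: two-sided bounds $u_{\mu}(x)\simeq |x|^{-\beta_{-}(\mu)}$ near $0$ and $u_{\mu}(x)\simeq |x|^{-\beta_{+}(\mu)}$ near infinity, coming from the characteristic equation $\Psi_{n,s}(\beta)=0$ together with a comparison argument, followed by Brezis--Nirenberg-type truncation estimates in which the only genuinely nonlocal difficulty is the cross term of the Gagliardo seminorm, handled by the splitting you indicate. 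Your three-regime computation for \eqref{estimate3}, including the integrability check $q\beta_{-}(\mu)<n$ at the origin, is the standard one and is correct; the two ingredients you invoke without executing (the sharp asymptotics of $u_{\mu}$, and the cross-term bound of the exact order $\varepsilon^{2\beta_{+}(\mu)+2s-n}$) are precisely the ones established in the cited literature, so the outline has no conceptual gap.

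One correction is in order: your assertion that the $\varepsilon$-independent full-space quantities ``equal the constants displayed in \eqref{estimate1} and \eqref{estimate2}'' cannot hold simultaneously as written. With the extremal normalization one has $\int_{\mathbb{R}^{n}}\bigl(|(-\Delta)^{\frac{s}{2}}U_{\varepsilon}|^{2}-\mu U_{\varepsilon}^{2}|x|^{-2s}\bigr)dx=\int_{\mathbb{R}^{n}}U_{\varepsilon}^{2_{s}^{*}(\alpha)}|x|^{-\alpha}dx=(\Lambda_{\mu,s,\alpha})^{\frac{n-\alpha}{2s-\alpha}}$, so after raising to the power $\frac{2}{2_{s}^{*}(\alpha)}$ the leading constant in \eqref{estimate2} would be $(\Lambda_{\mu,s,\alpha})^{\frac{n-2s}{2s-\alpha}}$, not $(\Lambda_{\mu,s,\alpha})^{\frac{n-\alpha}{2s-\alpha}}$ as displayed. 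The discrepancy lies in the statement as transcribed rather than in your scheme: the paper's own proof of Lemma \ref{energy} applies \eqref{estimate2} directly as an expansion of $\int_{\Omega}u_{\varepsilon}^{2_{s}^{*}(\alpha)}|x|^{-\alpha}dx$ \emph{without} the outer power, so the outer exponent there should be read as a typo; your remark that raising to that power ``preserves the error order'' is right, but you should not claim it preserves the displayed constant. (Also, trivially: you place $\beta_{+}(\mu)$ in the open interval $(\frac{n-2s}{2},n-2s)$, whereas the lemma permits $\mu=0$ with $\beta_{+}(0)=n-2s$; nothing in the estimates is affected.)
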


\section{Proof of Theorem \ref{critical}}
\begin{lemma}\label{geometry}
Suppose that $(f1)$, $(f2)$, $(f3)$ hold. Let $\{u_{n}\}\subset H_{0}^{s}(\Omega)$ be a sequence such that $I(u_{n})\rightarrow c$ and $I'(u_{n})\rightarrow 0$ in $(H_{0}^{s}(\Omega))^{-1}$, where $c\in \left(0,\frac{2s-\alpha}{2(n-\alpha)}(\Lambda_{\mu,s,\alpha})^{\frac{n-\alpha}{2s-\alpha}}\right)$. Then there exists $u\in H_{0}^{s}(\Omega)$ such that $u_{n}\rightharpoonup u$, up to a subsequence, $I'(u)=0$ and $u$ is a nontrival solution of problem (\ref{critical}).
\end{lemma}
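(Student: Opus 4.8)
The plan is to read this as a Brezis--Nirenberg type compactness statement, in which the level restriction $c<c^{*}:=\frac{2s-\alpha}{2(n-\alpha)}(\Lambda_{\mu,s,\alpha})^{\frac{n-\alpha}{2s-\alpha}}$ is precisely the threshold below which the critical Hardy--Sobolev term cannot swallow all the energy, so that a nontrivial critical point must survive in the weak limit. First I would establish that $\{u_{n}\}$ is bounded in $H_{0}^{s}(\Omega)$. Forming the combination $I(u_{n})-\frac{1}{\rho}\langle I'(u_{n}),u_{n}\rangle$, the critical integral is multiplied by the nonnegative factor $\frac{1}{\rho}-\frac{1}{2_{s}^{*}(\alpha)}$ and $(f_{3})$ bounds the remaining nonlinear contribution below by $-\frac{\eta}{\rho}\|u_{n}\|_{2}^{2}$, so that, since $\rho>2$, one gets an estimate of the form $(\frac{1}{2}-\frac{1}{\rho})\||u_{n}\||^{2}\le c+1+\frac{\eta}{\rho}\|u_{n}\|_{2}^{2}+o(\||u_{n}\||)$; treating $\|u_{n}\|_{2}^{2}$ as a lower-order quantity through the compact subcritical embedding and $(f_{1})$ (the standard Ambrosetti--Rabinowitz absorption) yields boundedness of $\||u_{n}\||$. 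Up to a subsequence, $u_{n}\rightharpoonup u$ in $H_{0}^{s}(\Omega)$, $u_{n}\to u$ in $L^{q}(\Omega)$ for every $q\in[1,2_{s}^{*}(0))$, and $u_{n}\to u$ a.e. in $\Omega$.

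Next I would check that $I'(u)=0$. For fixed $\phi\in H_{0}^{s}(\Omega)$ I pass to the limit in $\langle I'(u_{n}),\phi\rangle\to0$ term by term: $\langle u_{n},\phi\rangle\to\langle u,\phi\rangle$ by weak convergence; $\int_{\Omega}f_{+}(x,u_{n})\phi\,dx\to\int_{\Omega}f_{+}(x,u)\phi\,dx$ since $(f_{1})$ and the compact subcritical embedding give $f_{+}(\cdot,u_{n})\to f_{+}(\cdot,u)$ in $L^{p'}(\Omega)$; and $\int_{\Omega}\frac{u_{n,+}^{2_{s}^{*}(\alpha)-1}}{|x|^{\alpha}}\phi\,dx\to\int_{\Omega}\frac{u_{+}^{2_{s}^{*}(\alpha)-1}}{|x|^{\alpha}}\phi\,dx$ because the integrands are bounded in the dual weighted Lebesgue space and converge a.e. Hence $u$ is a weak solution of (\ref{nonnegative}).

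The heart of the argument, and the step I expect to be the main obstacle, is ruling out $u\equiv0$. Suppose $u\equiv0$. Then $u_{n}\to0$ in $L^{2}(\Omega)$ and $L^{p}(\Omega)$, so by $(f_{1})$ both $\int_{\Omega}F_{+}(x,u_{n})\,dx\to0$ and $\int_{\Omega}f_{+}(x,u_{n})u_{n}\,dx\to0$, and the relations $I(u_{n})\to c$, $\langle I'(u_{n}),u_{n}\rangle\to0$ collapse to the purely critical statements $\frac{1}{2}\||u_{n}\||^{2}-\frac{1}{2_{s}^{*}(\alpha)}\int_{\Omega}\frac{u_{n,+}^{2_{s}^{*}(\alpha)}}{|x|^{\alpha}}\,dx\to c$ and $\||u_{n}\||^{2}-\int_{\Omega}\frac{u_{n,+}^{2_{s}^{*}(\alpha)}}{|x|^{\alpha}}\,dx\to0$. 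Writing $\||u_{n}\||^{2}\to\ell$, these force $\int_{\Omega}\frac{u_{n,+}^{2_{s}^{*}(\alpha)}}{|x|^{\alpha}}\,dx\to\ell$ and $c=\big(\frac{1}{2}-\frac{1}{2_{s}^{*}(\alpha)}\big)\ell=\frac{2s-\alpha}{2(n-\alpha)}\ell$. If $\ell=0$ then $c=0$, contradicting $c>0$. If $\ell>0$, applying (\ref{Hardy-Sobolev}) to $u_{n}$ (extended by zero to $\mathbb{R}^{n}$) gives $\||u_{n}\||^{2}\ge\Lambda_{\mu,s,\alpha}\big(\int_{\Omega}\frac{u_{n,+}^{2_{s}^{*}(\alpha)}}{|x|^{\alpha}}\,dx\big)^{2/2_{s}^{*}(\alpha)}$, and in the limit $\ell\ge\Lambda_{\mu,s,\alpha}\,\ell^{2/2_{s}^{*}(\alpha)}$, i.e. $\ell\ge(\Lambda_{\mu,s,\alpha})^{\frac{n-\alpha}{2s-\alpha}}$; hence $c\ge\frac{2s-\alpha}{2(n-\alpha)}(\Lambda_{\mu,s,\alpha})^{\frac{n-\alpha}{2s-\alpha}}$, contradicting the upper bound on $c$. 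Therefore $u\not\equiv0$.

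Finally I would upgrade $u$ to a nonnegative solution of (\ref{critical}). Testing $I'(u)=0$ against $u_{-}=\max\{-u,0\}$, the right-hand side vanishes because $u_{+}^{2_{s}^{*}(\alpha)-1}u_{-}=0$ and $f_{+}(x,u)u_{-}=0$ pointwise, while the quadratic form satisfies $\langle u,u_{-}\rangle\le-\||u_{-}\||^{2}$, using the pointwise inequality $(a-b)(a_{-}-b_{-})\le-(a_{-}-b_{-})^{2}$ for the Gagliardo integrand together with $-\mu\int_{\Omega}\frac{uu_{-}}{|x|^{2s}}\,dx=\mu\int_{\Omega}\frac{u_{-}^{2}}{|x|^{2s}}\,dx$. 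This forces $\||u_{-}\||=0$, so $u\ge0$, whence $u_{+}=u$ and $f_{+}(x,u)=f(x,u)$ and $u$ solves (\ref{critical}). The only genuinely delicate point is the level estimate of the third paragraph: it is exactly the gap between $c<\frac{2s-\alpha}{2(n-\alpha)}(\Lambda_{\mu,s,\alpha})^{\frac{n-\alpha}{2s-\alpha}}$ and the concentration threshold produced by the Hardy--Sobolev inequality that prevents the weak limit from being trivial.
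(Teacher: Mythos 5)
Your second, third, and fourth paragraphs are fine and essentially reproduce the paper's argument: the identification $I'(u)=0$ (the paper simply invokes weak continuity of $I'$), the exclusion of $u\equiv 0$ via the level restriction (the paper writes it as $o(1)\geq \||u_{n}\||^{2}\bigl(1-(\Lambda_{\mu,s,\alpha})^{-2_{s}^{*}(\alpha)/2}\||u_{n}\||^{2_{s}^{*}(\alpha)-2}\bigr)$ and then $\||u_{n}\||^{2}\geq (\Lambda_{\mu,s,\alpha})^{\frac{n-\alpha}{2s-\alpha}}+o(1)$, which is the same computation as your $\ell$-dichotomy), and the sign argument with $u_{-}$ (which the paper performs in the proof of the Theorem rather than inside the lemma).

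The genuine gap is in your boundedness step. Condition $(f_{3})$ is \emph{not} the Ambrosetti--Rabinowitz condition: the slack term $\eta t^{2}$ leaves you with
\begin{align*}
\left(\frac{1}{2}-\frac{1}{\theta}\right)\||u_{n}\||^{2}\leq c+1+\frac{\eta}{\theta}\|u_{n}\|_{2}^{2}+o(\||u_{n}\||),
\end{align*}
and $\|u_{n}\|_{2}^{2}$ is not a lower-order quantity relative to $\||u_{n}\||^{2}$: the embedding $H_{0}^{s}(\Omega)\hookrightarrow L^{2}(\Omega)$ only gives $\|u_{n}\|_{2}^{2}\leq C\||u_{n}\||^{2}$, which is the wrong direction for absorption, and compactness of the embedding yields no smallness along a (putatively) unbounded sequence. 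If $\eta$ is large, the coefficient left after your ``absorption'' is negative and the inequality carries no information, so your first paragraph does not establish boundedness; note that you never use $(f_{2})$, whereas the lemma's hypotheses include it precisely for this step. The paper's actual argument is: assume $\||u_{n}\||\rightarrow\infty$, set $v_{n}=u_{n}/\||u_{n}\||$; dividing the displayed inequality by $\||u_{n}\||^{2}$ gives $\|v_{n}\|_{2}^{2}\geq \frac{\theta-2}{2\eta}+o(1)$, so the limit $v$ (strong in $L^{2}$ by compact embedding) is nonzero, and then $(f_{2})$ with $K$ large forces $0=\lim_{n\rightarrow\infty}I(u_{n})/\||u_{n}\||^{2}\leq \frac{1}{2}-K\int_{\Omega}v^{2}dx<0$, a contradiction. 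You need this (or a substitute) to close the argument. A second, minor slip in the same paragraph: the factor $\frac{1}{\rho}-\frac{1}{2_{s}^{*}(\alpha)}$ is nonnegative only when $\rho\leq 2_{s}^{*}(\alpha)$; since $F\geq 0$ by $(f_{2})$, condition $(f_{3})$ survives replacing $\rho$ by a smaller exponent, so one should work with $\theta=\min\{\rho,2_{s}^{*}(\alpha)\}$, as the paper does.
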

\begin{proof}
We use the method of contradiction to show that $\{u_{n}\}$ is bounded. Suppose that $\||u_{n}\||\rightarrow \infty$ as $n\rightarrow \infty$. Let $v_{n}=u_{n}/\||u_{n}\||$, then $\||v_{n}\||=1$. By $(f_{3})$, we get
\begin{align}\label{bounded}
c+1+o(1)\||u_{n}\||&\geq I(u_{n})-\frac{1}{\theta}\langle I'(u_{n}),u_{n}\rangle\nonumber\\
&=\left(\frac{1}{2}-\frac{1}{\theta}\right)\||u_{n}\||^{2}+\left(\frac{1}{\theta}-\frac{1}{2_{s}^{*}(0)}\right)\int_{\Omega}\frac{(u_{n})_{+}^{2_{s}^{*}(\alpha)}}{|x|^{\alpha}}dx\nonumber\\
&\quad-\int_{\Omega}\left(F_{+}(x,u_{n})-\frac{1}{\theta}f_{+}(x,u_{n})u_{n}\right)dx\nonumber\\
&\geq \left(\frac{1}{2}-\frac{1}{\theta}\right)\||u_{n}\||^{2}-\frac{\eta}{\theta}\|u_{n}\|_{2}^{2}\nonumber\\
&=\||u_{n}\||^{2}\left(\frac{\theta-2}{2\theta}-\frac{\eta}{\theta}\|v_{n}\|_{2}^{2}\right),
\end{align}
where $\theta=\min\{2_{s}^{*}(\alpha),\rho\}$. Up to a subsequence, we assume that $v_{n}\rightharpoonup v$ in $H_{0}^{s}(\Omega)$. By Corollary 7.2 in \cite{DPV}, $H_{0}^{s}(\Omega)\hookrightarrow L^{q}(\Omega)$ is compact for
$1\leq q<2_{s}^{\ast}(0)$. Then $v_{n}\rightarrow v$ in $L^{q}(\Omega)$, $1\leq q<2_{s}^{\ast}(0)$, and  $v_{n}\rightarrow v$ \mbox{a.e.} in $\Omega$. Thus, by (\ref{bounded}), we get $v\neq 0$. On the other hand,
\begin{align*}
0&=\lim_{n\rightarrow \infty}\frac{I(u_{n})}{\||u_{n}\||^{2}}\\
&\quad\leq \lim_{n\rightarrow \infty}\left(\frac{1}{2}-\int_{\Omega}\frac{F_{+}(x,u_{n})}{u_{n}^{2}}v_{n}^{2}dx\right)\\
&\quad\leq \lim_{n\rightarrow \infty}\left(\frac{1}{2}-K\int_{\Omega}v_{n}^{2}dx\right)\\
&\quad=\frac{1}{2}-K\int_{\Omega}v^{2}dx<0,
\end{align*}
since $K$ is big enough, this is a contradiction. Therefore $\{u_{n}\}$ is bounded in $H_{0}^{s}(\Omega)$ and there exists $u\in H_{0}^{s}(\Omega)$ such that $u_{n}\rightharpoonup u$ up to a subsequence. By the weak continuity of
$I'$, we have $I'(u)=0$. Assume that $u=0$ in $\Omega$, since $f(x,u)$ is subcritical, from $\langle I'(u_{n}),u_{n}\rangle=o(1)$,
\begin{align}\label{alpha}
\||u_{n}\||^{2}-\int_{\Omega}\frac{(u_{n})_{+}^{2_{s}^{*}(\alpha)}}{|x|^{\alpha}}dx=o(1).
\end{align}
By the definition of $\Lambda_{\mu,s,\alpha}$,
\begin{align}\label{Lambda}
\||u_{n}\||^{2}\geq \Lambda_{\mu,s,\alpha}\left(\int_{\Omega}\frac{|u_{n}|^{2_{s}^{*}(\alpha)}}{|x|^{\alpha}}dx\right)^{\frac{2}{2_{s}^{*}(\alpha)}}.
\end{align}
By (\ref{alpha}) and (\ref{Lambda}),
\begin{align}\label{musalpha}
o(1)\geq \||u_{n}\||^{2}\left(1-(\Lambda_{\mu,s,\alpha})^{-\frac{2_{s}^{*}(\alpha)}{2}}\||u_{n}\||^{2_{s}^{*}(\alpha)-2}\right).
\end{align}
If $\||u_{n}\||\rightarrow 0$, this contradict $c>0$. Then by (\ref{musalpha}),
\begin{align}\label{lower}
\||u_{n}\||^{2}\geq (\Lambda_{\mu,s,\alpha})^{\frac{n-\alpha}{2s-\alpha}}+o(1).
\end{align}
It follows from (\ref{alpha}) and (\ref{lower}) that
\begin{align*}
I(u_{n})&=\frac{1}{2}\||u_{n}\||^{2}-\frac{1}{2_{s}^{*}(\alpha)}\int_{\Omega}\frac{(u_{n})_{+}^{2_{s}^{*}(\alpha)}}{|x|^{\alpha}}dx+o(1)\\
&=\frac{2s-\alpha}{2(n-\alpha)}\||u_{n}\||^{2}+o(1)\\
&\geq \frac{2s-\alpha}{2(n-\alpha)}(\Lambda_{\mu,s,\alpha})^{\frac{n-\alpha}{2s-\alpha}}+o(1),
\end{align*}
this contradicts $c<\frac{2s-\alpha}{2(n-\alpha)}(\Lambda_{\mu,s,\alpha})^{\frac{n-\alpha}{2s-\alpha}}$. Therefore $u\neq 0$ and $u$ is a nontrival solution of problem (\ref{critical}).
\end{proof}
\begin{lemma}\label{energy}
Assume that $0<\alpha<2s<n$, $0\leq \mu<\mu_{H}(s)$, $\frac{n-2s}{2}<\beta_{+}(\mu)<\frac{n}{2}$ and $(f_{1})$, $(f_{2})$ hold. Then there exists $u_{0}\in H_{0}^{s}(\Omega)$, $u_{0}\neq 0$, such that
\begin{align*}
\sup_{t\geq 0}I(tu_{0})<\frac{2s-\alpha}{2(n-\alpha)}(\Lambda_{\mu,s,\alpha})^{\frac{n-\alpha}{2s-\alpha}}.
\end{align*}
\end{lemma}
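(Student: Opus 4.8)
The plan is to test the functional along the ray generated by the truncated extremal $u_{\varepsilon}=\varphi U_{\varepsilon}$ introduced above, taking $u_{0}=u_{\varepsilon}$ for a suitably small $\varepsilon>0$, and to combine the sharp asymptotics (\ref{estimate1})--(\ref{estimate3}) with the lower bound in $(f_{2})$. Since $U_{\varepsilon}>0$ and $\varphi\geq 0$ we have $u_{\varepsilon}\geq 0$ (in particular $u_{\varepsilon}\neq 0$), so $(tu_{\varepsilon})_{+}=tu_{\varepsilon}$ for $t\geq 0$, and $(f_{2})$ gives $F_{+}(x,tu_{\varepsilon})=F(x,tu_{\varepsilon})\geq Kt^{2}u_{\varepsilon}^{2}$. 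Hence
\[
I(tu_{\varepsilon})\leq h(t):=\frac{t^{2}}{2}\Big(\||u_{\varepsilon}\||^{2}-2K\|u_{\varepsilon}\|_{2}^{2}\Big)-\frac{t^{2_{s}^{*}(\alpha)}}{2_{s}^{*}(\alpha)}\int_{\Omega}\frac{u_{\varepsilon}^{2_{s}^{*}(\alpha)}}{|x|^{\alpha}}\,dx,
\]
and it suffices to bound $\sup_{t\geq 0}h(t)$ strictly below the threshold.

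Write $\tilde A_{\varepsilon}=\||u_{\varepsilon}\||^{2}-2K\|u_{\varepsilon}\|_{2}^{2}$ and $B_{\varepsilon}=\int_{\Omega}u_{\varepsilon}^{2_{s}^{*}(\alpha)}|x|^{-\alpha}\,dx$. If $\tilde A_{\varepsilon}\leq 0$ then $h\leq 0$ and the claim is immediate, the threshold being positive. If $\tilde A_{\varepsilon}>0$, then $h(t)\to-\infty$ as $t\to\infty$ and $h>0$ for small $t$, so the supremum is attained at $t_{\varepsilon}=(\tilde A_{\varepsilon}/B_{\varepsilon})^{1/(2_{s}^{*}(\alpha)-2)}$; using $2_{s}^{*}(\alpha)-2=2(2s-\alpha)/(n-2s)$ an elementary computation gives
\[
\sup_{t\geq 0}h(t)=\frac{2s-\alpha}{2(n-\alpha)}\Big(\frac{\tilde A_{\varepsilon}}{B_{\varepsilon}^{2/2_{s}^{*}(\alpha)}}\Big)^{\frac{n-\alpha}{2s-\alpha}}.
\]
So everything reduces to showing that the bracketed ratio is strictly less than $1$ for $\varepsilon$ small.

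Now I insert the asymptotics. By (\ref{estimate1}), $\||u_{\varepsilon}\||^{2}=(\Lambda_{\mu,s,\alpha})^{(n-\alpha)/(2s-\alpha)}+O(\varepsilon^{2\beta_{+}(\mu)+2s-n})$, and by (\ref{estimate2}), $B_{\varepsilon}^{2/2_{s}^{*}(\alpha)}=(\Lambda_{\mu,s,\alpha})^{(n-\alpha)/(2s-\alpha)}+O(\varepsilon^{2_{s}^{*}(\alpha)\beta_{+}(\mu)+\alpha-n})$. Since $\beta_{+}(\mu)>(n-2s)/2$, both exponents are positive, and a direct computation shows $2_{s}^{*}(\alpha)\beta_{+}(\mu)+\alpha-n>2\beta_{+}(\mu)+2s-n$, so the denominator error is of strictly higher order than the numerator error and is negligible. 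Moreover $\beta_{+}(\mu)<n/2$ forces $q=2<n/\beta_{+}(\mu)$, so by (\ref{estimate3}) $\|u_{\varepsilon}\|_{2}^{2}$ is of order $\varepsilon^{2\beta_{+}(\mu)+2s-n}$, precisely the order of the numerator error. Taking $K$ large (permitted by $(f_{2})$) makes $-2K\|u_{\varepsilon}\|_{2}^{2}$ dominate the $O(\varepsilon^{2\beta_{+}(\mu)+2s-n})$ error in $\||u_{\varepsilon}\||^{2}$, forcing $\tilde A_{\varepsilon}/B_{\varepsilon}^{2/2_{s}^{*}(\alpha)}<1$ for small $\varepsilon$; raising to the positive power $(n-\alpha)/(2s-\alpha)$ preserves the strict inequality, whence $\sup_{t\geq 0}I(tu_{\varepsilon})\leq\sup_{t\geq 0}h(t)<\frac{2s-\alpha}{2(n-\alpha)}(\Lambda_{\mu,s,\alpha})^{(n-\alpha)/(2s-\alpha)}$.

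I expect the delicate point to be exactly this coincidence of orders: the gain from $(f_{2})$ and the error in the energy estimate (\ref{estimate1}) both scale like $\varepsilon^{2\beta_{+}(\mu)+2s-n}$, so the crude big-$O$ bounds do not by themselves decide the sign. The argument closes only because in the regime $q=2<n/\beta_{+}(\mu)$ the bound (\ref{estimate3}) is of exact order, i.e. one has the matching lower bound $\|u_{\varepsilon}\|_{2}^{2}\gtrsim\varepsilon^{2\beta_{+}(\mu)+2s-n}$ coming from the explicit decay of $U_{\varepsilon}$ near the origin, together with the freedom to enlarge $K$. This is what converts the near-equality at the threshold into the required strict inequality, and it is the reason the hypotheses impose both $\beta_{+}(\mu)>(n-2s)/2$ (positivity of the exponent) and $\beta_{+}(\mu)<n/2$ (the favorable subcritical regime for the $L^{2}$ estimate).
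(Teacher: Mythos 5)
Your strategy is the paper's own: test $I$ along the ray $t\mapsto tu_{\varepsilon}$ with $u_{\varepsilon}=\varphi U_{\varepsilon}$, feed in the asymptotics (\ref{estimate1})--(\ref{estimate3}), observe that the gain $K\|u_{\varepsilon}\|_{2}^{2}$ and the error in (\ref{estimate1}) share the exact order $\varepsilon^{2\beta_{+}(\mu)+2s-n}$ (this is where both $\beta_{+}(\mu)>\frac{n-2s}{2}$ and $\beta_{+}(\mu)<\frac{n}{2}$ enter, exactly as you say), and win by taking $K$ large. Your one structural variation -- absorbing $-Kt^{2}\|u_{\varepsilon}\|_{2}^{2}$ into the quadratic coefficient $\tilde A_{\varepsilon}$ and maximizing $h$ in closed form -- is actually a slight streamlining: the paper instead localizes the maximizer $t_{\varepsilon}$ from both sides (the bound $t_{\varepsilon}\leq t_{\varepsilon}^{0}$ and the lower bound (\ref{same}), $t_{\varepsilon}^{2_{s}^{*}(\alpha)-2}\geq \frac{1}{2}$) before subtracting $Kt_{\varepsilon}^{2}\|u_{\varepsilon}\|_{2}^{2}$; your version makes (\ref{same}) unnecessary. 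Your closing remark is also correct and worth keeping: since gain and error are of the same order, the big-$O$ in (\ref{estimate3}) must really be an exact order, i.e. one needs $\|u_{\varepsilon}\|_{2}^{2}\geq c\,\varepsilon^{2\beta_{+}(\mu)+2s-n}$ with $c>0$ independent of $K$; the paper elides this by writing $-K(\frac{1}{2})^{2/(2_{s}^{*}(\alpha)-2)}O(\varepsilon^{2\beta_{+}(\mu)+2s-n})$, an abuse of big-$O$ as a lower bound.

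There is, however, one genuine slip in your reduction: the target should be $\tilde A_{\varepsilon}/B_{\varepsilon}^{2/2_{s}^{*}(\alpha)}<\Lambda_{\mu,s,\alpha}$, not $<1$. From your (correct) formula $\sup_{t\geq 0}h(t)=\frac{2s-\alpha}{2(n-\alpha)}\bigl(\tilde A_{\varepsilon}/B_{\varepsilon}^{2/2_{s}^{*}(\alpha)}\bigr)^{\frac{n-\alpha}{2s-\alpha}}$, a ratio below $1$ only yields $\sup h<\frac{2s-\alpha}{2(n-\alpha)}$, which sits below the threshold $\frac{2s-\alpha}{2(n-\alpha)}(\Lambda_{\mu,s,\alpha})^{\frac{n-\alpha}{2s-\alpha}}$ only if $\Lambda_{\mu,s,\alpha}\geq 1$ -- a non sequitur in general. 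The confusion is traceable to (\ref{estimate2}) as printed, which cannot be literally correct: combined with (\ref{estimate1}) it would force the Rayleigh quotient of $u_{\varepsilon}$ to tend to $1$ rather than to $\Lambda_{\mu,s,\alpha}$, contradicting the variational characterization (\ref{Hardy-Sobolev}). The consistent statement -- and the one the paper actually substitutes in its proof of this lemma -- is that the integral itself satisfies $\int_{\Omega}u_{\varepsilon}^{2_{s}^{*}(\alpha)}|x|^{-\alpha}dx=(\Lambda_{\mu,s,\alpha})^{\frac{n-\alpha}{2s-\alpha}}+O(\varepsilon^{2_{s}^{*}(\alpha)\beta_{+}(\mu)+\alpha-n})$, so that $B_{\varepsilon}^{2/2_{s}^{*}(\alpha)}=(\Lambda_{\mu,s,\alpha})^{\frac{n-2s}{2s-\alpha}}+O(\cdot)$ and hence $\tilde A_{\varepsilon}/B_{\varepsilon}^{2/2_{s}^{*}(\alpha)}\rightarrow\Lambda_{\mu,s,\alpha}$, since $\frac{n-\alpha}{2s-\alpha}-\frac{n-2s}{2s-\alpha}=1$. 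With that normalization fixed, your large-$K$ argument closes verbatim with $\Lambda_{\mu,s,\alpha}$ in place of $1$, and your comparison of error exponents ($2_{s}^{*}(\alpha)\beta_{+}(\mu)+\alpha-n>2\beta_{+}(\mu)+2s-n$, from $\beta_{+}(\mu)>\frac{n-2s}{2}$) is correct as written.
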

\begin{proof}
Since $U_{\varepsilon}(x)>0$, then $u_{\varepsilon}(x)\geq 0$ and $(u_{\varepsilon})_{+}=\max\{u_{\varepsilon},0\}=u_{\varepsilon}$. Set
\begin{align*}
g(t)=I(tu_{\varepsilon})=\frac{t^{2}}{2}\||u_{\varepsilon}\||^{2}-\frac{t^{2_{s}^{*}(\alpha)}}{2_{s}^{*}(\alpha)}\int_{\Omega}\frac{u_{\varepsilon}^{2_{s}^{*}(\alpha)}}{|x|^{\alpha}}dx-\int_{\Omega}F_{+}(x,tu_{\varepsilon})dx.
\end{align*}
By (\ref{estimate2}),
\begin{align*}
g(t)=\frac{t^{2}}{2}\||u_{\varepsilon}\||^{2}-\frac{t^{2_{s}^{*}(\alpha)}}{2_{s}^{*}(\alpha)}\left[(\Lambda_{\mu,s,\alpha})^{\frac{n-\alpha}{2s-\alpha}}+O(\varepsilon^{2_{s}^{*}(\alpha)\beta_{+}(\mu)+\alpha-n})\right]
-\int_{\Omega}F_{+}(x,tu_{\varepsilon})dx.
\end{align*}
Let
\begin{align*}
\bar{g}(t)=\frac{t^{2}}{2}\||u_{\varepsilon}\||^{2}-\frac{t^{2_{s}^{*}(\alpha)}}{2_{s}^{*}(\alpha)}(\Lambda_{\mu,s,\alpha})^{\frac{n-\alpha}{2s-\alpha}}.
\end{align*}
Note that $\lim_{t\rightarrow +\infty}g(t)=-\infty$, $g(0)=0$, $g(t)>0$ for $t>0$ small enough, thus $\sup_{t\geq 0}g(t)$ is attained for some $t_{\varepsilon}>0$. Since
\begin{align*}
0&=g'(t_{\varepsilon})\\
&=t_{\varepsilon}\left(\||u_{\varepsilon}\||^{2}-t_{\varepsilon}^{2_{s}^{*}(\alpha)-2}[(\Lambda_{\mu,s,\alpha})^{\frac{n-\alpha}{2s-\alpha}}+O(\varepsilon^{2_{s}^{*}(\alpha)\beta_{+}(\mu)+\alpha-n})]
-\frac{1}{t_{\varepsilon}}\int_{\Omega}f_{+}(x,t_{\varepsilon}u_{\varepsilon})u_{\varepsilon}dx\right),
\end{align*}
we have
\begin{align*}
\||u_{\varepsilon}\||^{2}&=t_{\varepsilon}^{2_{s}^{*}(\alpha)-2}[(\Lambda_{\mu,s,\alpha})^{\frac{n-\alpha}{2s-\alpha}}+O(\varepsilon^{2_{s}^{*}(\alpha)\beta_{+}(\mu)+\alpha-n})]+\frac{1}{t_{\varepsilon}}\int_{\Omega}f_{+}(x,t_{\varepsilon}u_{\varepsilon})u_{\varepsilon}dx\\
&\geq (\Lambda_{\mu,s,\alpha})^{\frac{n-\alpha}{2s-\alpha}}t_{\varepsilon}^{2_{s}^{*}(\alpha)-2},
\end{align*}
then
\begin{align*}
t_{\varepsilon}\leq \left[\frac{\||u_{\varepsilon}\||^{2}}{(\Lambda_{\mu,s,\alpha})^{\frac{n-\alpha}{2s-\alpha}}}\right]^{\frac{1}{2_{s}^{*}(\alpha)-2}}\triangleq t_{\varepsilon}^{0}.
\end{align*}
By $(f_{1})$, we have
\begin{align}\label{f}
\||u_{\varepsilon}\||^{2}&=t_{\varepsilon}^{2_{s}^{*}(\alpha)-2}[(\Lambda_{\mu,s,\alpha})^{\frac{n-\alpha}{2s-\alpha}}+O(\varepsilon^{2_{s}^{*}(\alpha)\beta_{+}(\mu)+\alpha-n})]+c_{1}\int_{\Omega}|u_{\varepsilon}|^{2}dx\nonumber\\
&\quad+c_{2}t_{\varepsilon}^{p-2}\int_{\Omega}|u_{\varepsilon}|^{p}dx.
\end{align}
Choosing $\varepsilon$ small enough, by (\ref{f}), (\ref{estimate1}) and (\ref{estimate3}), we have
\begin{align}\label{same}
t_{\varepsilon}^{2_{s}^{*}(\alpha)-2}\geq \frac{1}{2}.
\end{align}
On the other hand, $\bar{g}(t)$ attains its maximum at $t_{\varepsilon}^{0}=\left[\frac{\||u_{\varepsilon}\||^{2}}{(\Lambda_{\mu,s,\alpha})^{\frac{n-\alpha}{2s-\alpha}}}\right]^{\frac{1}{2_{s}^{*}(\alpha)-2}}$ and is increasing in $[0,t_{0}]$. Since $\frac{n-2s}{2}<\beta_{+}(\mu)<\frac{n}{2}$, by (\ref{estimate1}), (\ref{estimate3}), (\ref{same}) and $(f_{2})$,
\begin{align*}
g(t_{\varepsilon})&\leq \bar{g}(t_{0})-\int_{\Omega}F_{+}(x,t_{\varepsilon}u_{\varepsilon})dx\\
&=\frac{2s-\alpha}{2(n-\alpha)(\Lambda_{\mu,s,\alpha})^{\frac{(n-\alpha)(n-2s)}{(2s-\alpha)^{2}}}}\||u_{\varepsilon}\||^{\frac{2(n-\alpha)}{2s-\alpha}}-\int_{\Omega}F_{+}(x,t_{\varepsilon}u_{\varepsilon})dx\\
&\leq \frac{2s-\alpha}{2(n-\alpha)}(\Lambda_{\mu,s,\alpha})^{\frac{n-\alpha}{2s-\alpha}}+O(\varepsilon^{2\beta_{+}(\mu)+2s-n})-Kt_{\varepsilon}^{2}\int_{\Omega}|u_{\varepsilon}|^{2}dx\\
&\leq  \frac{2s-\alpha}{2(n-\alpha)}(\Lambda_{\mu,s,\alpha})^{\frac{n-\alpha}{2s-\alpha}}+O(\varepsilon^{2\beta_{+}(\mu)+2s-n})-K\left(\frac{1}{2}\right)^{\frac{2}{2_{s}^{*}(\alpha)-2}}O(\varepsilon^{2\beta_{+}(\mu)+2s-n}).
\end{align*}
Since $K$ is big enough, we have
\begin{align*}
\sup_{t\geq 0}I(tu_{\varepsilon})=g(t_{\varepsilon})<\frac{2s-\alpha}{2(n-\alpha)}(\Lambda_{\mu,s,\alpha})^{\frac{n-\alpha}{2s-\alpha}}.
\end{align*}
\end{proof}
\textbf{Proof of Theorem \ref{critical}.}
From fractional Hardy-Sobolev inequality and fractional Sobolev inequality, it follows that
\begin{align}\label{Hardy}
\int_{\mathbb{R}^{n}}\frac{|u|^{2_{s}^{*}(\alpha)}}{|x|^{\alpha}}dx\leq C\||u\||^{2_{s}^{*}(\alpha)}, \ \|u\|_{q}^{q}\leq C\||u\||^{q}, \ 1\leq q\leq 2_{s}^{*}(0), \ u\in H_{0}^{s}(\Omega).
\end{align}
By (\ref{Hardy}), $(f_{1})$,
\begin{align*}
I(u)&=\frac{1}{2}\||u|\|^{2}-\frac{1}{2_{s}^{*}(\alpha)}\int_{\Omega}\frac{u_{+}^{2_{s}^{*}(\alpha)}}{|x|^{\alpha}}dx-\int_{\Omega}F_{+}(x,u)dx\\
&\geq \frac{1}{2}\||u|\|^{2}-\frac{C}{2_{s}^{*}(\alpha)}\||u_{+}\||^{2_{s}^{*}(\alpha)}-\frac{Cc_{1}}{2}\|u\|_{2}^{2}-\frac{Cc_{2}}{p}\|u\|_{p}^{p}\\
&\geq \frac{1-Cc_{1}}{2}\||u|\|^{2}-\frac{C}{2_{s}^{*}(\alpha)}\||u_{+}\||^{2_{s}^{*}(\alpha)}-\frac{C}{p}\|u\|_{p}^{p}.
\end{align*}
Thus, there exists $\delta>0$ such that $I(u)\geq \delta$ for all $u\in\partial B_{r}(0)=\{u\in H_{0}^{s}(\Omega),\||u|\|=r\}$, where $r>0$ small enough. Since $F_{+}(x,t)\geq 0$, for $u_{0}\neq 0$, we have
\begin{align*}
I(tu_{0})\leq \frac{t^{2}}{2}\||u_{0}|\|^{2}-\frac{t^{2_{s}^{*}(\alpha)}}{2_{s}^{*}(\alpha)}\int_{\Omega}\frac{u_{+}^{2_{s}^{*}(\alpha)}}{|x|^{\alpha}}dx,
\end{align*}
then $\lim_{t\rightarrow +\infty}I(tu_{0})\rightarrow -\infty$. We can choose $t_{0}$ such that $\|t_{0}u_{0}\|>r$ and $I(t_{0}u_{0})<0$. By the Mountain Pass Lemma \cite{Rabinowitz}, there is a sequence $\{u_{n}\}\subset H_{0}^{s}(\Omega)$ satisfying
\begin{align*}
I(u_{n})\rightarrow c\geq \delta,  \ I'(u_{n})\rightarrow 0,
\end{align*}
where
\begin{align*}
c&=\inf_{\gamma\in \Gamma}\max_{t\in[0,1]}I(\gamma(t)),\\
\Gamma&=\{\gamma(t)\in C([0,1],H_{0}^{s}(\Omega))|\gamma(0)=0,\ \gamma(1)=t_{0}u_{0}\}.
\end{align*}
By Lemma \ref{geometry} and Lemma \ref{energy}, we get a $(PS)_{c}$ sequence $\{u_{n}\}\subset H_{0}^{s}(\Omega)$ and $u\in H_{0}^{s}(\Omega)$ such that $I'(u)=0$. Then $u$ is a solution of (\ref{nonnegative}) and
$\langle I'(u), u^{-}\rangle=0$, where $u^{-}=\min\{u,0\}$. Therefore $u\geq 0$, we get a nonnegative solution of (\ref{critical}).

\section{Acknowledgements}
This work is partially supported by NSFC under the grant 11571269, China Postdoctoral Science Foundation Funded Project under grants 2015M572539, 2016T90899.


\begin{thebibliography}{99}
\bibitem{BMP} B. Barrios, M. Medina, I. Peral, Some remarks on the solvability of non-local elliptic problems with the Hardy potential, Commun. Contemp. Math. 16, 1350046 (2014) 29 pages.
\bibitem{Bhakta} M. Bhakta, S. Santra, On singular equations with critical and supercritical exponents, J. Differential Equations. 263 (2017) 2886-2953.
\bibitem{Yan}  D. Cao, S. Yan, Infinitely many solutions for an elliptic problem involving critical Sobolev growth and Hardy potential, Calc. Var. Partial Differential Equations 38 (2010) 471-501.
\bibitem{DPV}  E. Di Nezza, G. Palatucci, E. Valdinoci, Hitchhiker's guide to the fractional Sobolev spaces, Bull. Sci. Math. 136 (2012) 521-573.
\bibitem{Fall}  M.M. Fall, Semilinear elliptic equations for the fractional Laplacian with Hardy potential, Nonlinear Anal. In Press, doi.org/10.1016/j.na.2018.07.008.
\bibitem{Frank} R.L. Frank, E.H. Lieb, R. Seiringer, Hardy-Lieb-Thirring inequalities for fractional Schr\"odinger operators, J. Amer. Math. Soc. 21 (2008) 925-950.
\bibitem{GS} N. Ghoussoub, S. Shakerian, Borderline variational problems involving fractional Laplacians and critical singularities, Adv. Nonlinear. Stud. 15 (2015) 527-555.
\bibitem{Kang} D. Kang, S. Peng, Positive solutions for singular critical elliptic problems, Appl. Math. Lett. 17 (2004) 411-416.
\bibitem{Rabinowitz} P.H. Rabinowitz, Minmax Methods in Critical Point Theory with Applications to Differential Equations, CBMS Reg. Conf. Ser. Math., vol. 65, Amer. Math. Soc., Providence, RI, 1986.
\bibitem{Shakerian} S. Shakerian, Existence result for non-linearly perturbed Hardy-Schr\"odinger problems: local and non-local cases, arXiv:1711.08839v1
\bibitem{ZH} J. Zhang, T-S. Hsu, Nonlocal elliptic systems involving critical Sobolev-Hardy exponents and concave-convex nonlinearities, Taiwan J. Math. accepted, doi:10.11650/tjm/190109.
\bibitem{Zhang} J. Zhang, S. Ma, On nonhomogeneous elliptic problems involving the Hardy potential and critical Sobolev exponent, Rocky Mountain J. Math. 47 (2017) 687-710.
\end{thebibliography}

\end{document}